\newtheorem{theorem}{Theorem}
\newtheorem{proposition}[theorem]{Proposition}
\setlist[enumerate]{leftmargin=*}
\setlist[itemize]{leftmargin=*}
\theoremstyle{remark}
\newtheorem*{remark}{\bf Remark}
\numberwithin{theorem}{section} \numberwithin{equation}{section}
\theoremstyle{plain}
\theoremstyle{definition}
\setlist[itemize]{noitemsep, topsep=0pt}
\setlist[enumerate]{noitemsep, topsep=0pt}
\title{Distribution of the sum of reciprocal parts for distinct parts partitions}
\author{Walter Bridges}
\address{University of North Texas, Department of Mathematics, Denton, TX, USA}
\email{Walter.Bridges@unt.edu}
\date{\today}
\subjclass[2020]{11P82, 11D68, 60C05}
\keywords{egyptian fractions, distinct parts partitions, distribution of partition statistics}
\begin{document}
\maketitle

\begin{abstract}
    Given an integer partition of $n$ into distinct parts, the sum of the reciprocal parts is an example of an egyptian fraction.  We study this statistic under the uniform measure on distinct parts partitions of $n$ and prove that, as $n \to \infty$, the sum of reciprocal parts is distributed away from its mean like a random harmonic sum.
\end{abstract}

\section{Introduction}

A {\it distinct parts partition} of $n \in \mathbb{N}$ is a sequence of integers,
\begin{equation}\label{E:DistinctPartitionDef}
\lambda_1 > \dots > \lambda_{\ell} >0, \qquad \text{with} \qquad \sum_{j=1}^{\ell}\lambda_j=n.
\end{equation}
In this paper, we study the sum of reciprocal parts,
$$
S(\lambda):=\sum_{j=1}^n \frac{1}{\lambda_j}.
$$
With respect to the uniform probability measure on distinct parts partitions of $n$, we prove that, away from its mean, $S$ is distributed like a random harmonic sum as $n \to \infty$.  Let
$$
H:=\sum_{k \geq 1} \frac{\varepsilon_k}{k},
$$
where the $\varepsilon_k$ are independent random variables with\footnote{Unless stated otherwise, $P$ denotes the probability measure induced by the random variable in its argument.} $P(\varepsilon_k=\pm 1)=\frac{1}{2}$.  The random harmonic sum $H$ converges almost surely, for example by summation by parts and the law of the iterated logarithm; for further properties see \cite{Schmuland}.  Let $P_n$ be the uniform measure on distinct parts partitions of $n$.  
\begin{theorem}\label{T:Main}
For any $x \in \mathbb{R}$, we have
$$
\lim_{n \to \infty} P_n\left(2S-\log(\sqrt{3n})\leq x \right) = P(H\leq x).
$$
\end{theorem}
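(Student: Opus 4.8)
The plan is to pass to the grand canonical (Boltzmann) ensemble and then transfer back by conditioning on the size. For $q\in(0,1)$ let $\mathbb{P}_q$ be the probability measure on the set of all distinct parts partitions that assigns to $\lambda$ the weight $q^{|\lambda|}\big/\prod_{k\ge 1}(1+q^k)$; under $\mathbb{P}_q$ the part-indicators $X_k:=\mathbf{1}[k\text{ occurs as a part of }\lambda]$ are independent Bernoulli variables with $\mathbb{P}_q(X_k=1)=\frac{q^k}{1+q^k}=:p_k$, and $P_n(\cdot)=\mathbb{P}_q(\,\cdot\mid |\lambda|=n)$ for \emph{every} $q$. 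I will choose $q=q_n=e^{-\beta_n}$ by the mean condition $\sum_{k\ge 1}kp_k=n$; since $\sum_{k\ge1}\frac{k}{e^{\beta k}+1}=\frac{\pi^2}{12\beta^2}+O(1)$ as $\beta\to 0^+$, this forces $\beta_n=\frac{\pi}{2\sqrt{3n}}\bigl(1+O(1/n)\bigr)$. The proof then splits into two parts: \textbf{(A)} the grand canonical limit law, that $2S-\log\sqrt{3n}\Rightarrow H$ under $\mathbb{P}_{q_n}$; and \textbf{(B)} the transfer, that conditioning on $|\lambda|=n$ does not change this limit.

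For \textbf{(A)} I decompose $S=S_{\le K}+S_{>K}$ with $S_{\le K}:=\sum_{k\le K}X_k/k$ and $K$ fixed. Since $q_n\to 1$ we have $p_k\to\frac12$ for each fixed $k$, so $(X_1,\dots,X_K)$ converges in law to $K$ independent fair coins; as $2X_k-1\in\{\pm1\}$ this gives $2S_{\le K}-\sum_{k\le K}\frac1k\Rightarrow\sum_{k\le K}\varepsilon_k/k$, which converges to $H$ almost surely as $K\to\infty$. It remains to show the tail $R_n^K:=2S_{>K}-\bigl(\log\sqrt{3n}-\sum_{k\le K}\frac1k\bigr)$ is uniformly negligible, i.e. $\lim_{K\to\infty}\limsup_{n\to\infty}\mathbb{P}_{q_n}(|R_n^K|>\delta)=0$ for all $\delta>0$. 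By independence $\mathrm{Var}_{q_n}(2S_{>K})=4\sum_{k>K}\frac{p_k(1-p_k)}{k^2}\le\sum_{k>K}\frac1{k^2}\le\frac1K$, so by Chebyshev it suffices that $\mathbb{E}_{q_n}[2S_{>K}]=\log\sqrt{3n}-\sum_{k\le K}\frac1k+o_n(1)$. This follows from the (trivial) fact that $2\sum_{k\le K}p_k/k\to\sum_{k\le K}\frac1k$ for fixed $K$, combined with the identity $2\sum_{k\ge1}\frac{p_k}{k}=2\sum_{k\ge1}\frac{1}{k(e^{\beta_n k}+1)}=2\log\frac{\bigl[\prod_{j\ge1}(1-e^{-2j\beta_n})\bigr]^2}{\prod_{m\ge1}(1-e^{-m\beta_n})}$ and the Dedekind $\eta$ asymptotic $\prod_{m\ge1}(1-e^{-m\beta})\sim\sqrt{2\pi/\beta}\,e^{-\pi^2/(6\beta)}$, which together give $2\mathbb{E}_{q_n}[S]=\log\frac{\pi}{2\beta_n}+o(1)=\log\sqrt{3n}+o(1)$. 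Feeding these into the standard three-$\varepsilon$ argument for distributional limits proves \textbf{(A)}; note the limit is exactly $H$, with no residual constant, precisely because $2\mathbb{E}_{q_n}[S]-\log\sqrt{3n}\to0$.

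For \textbf{(B)} the heuristic is that $S$ is governed by the small parts while $|\lambda|$ is governed by the parts of size $\asymp\sqrt n$: under $\mathbb{P}_{q_n}$ one has $\mathrm{Var}(S)=O(1)$, $\mathrm{Var}(|\lambda|)\asymp n^{3/2}$ and $\mathrm{Cov}(S,|\lambda|)\asymp\sqrt n$, so $S$ and $|\lambda|$ are asymptotically independent and conditioning on $|\lambda|=n$ ought not to affect the law of $S$. To make this rigorous I would again split at a fixed $K$: write $|\lambda|=N_{\le K}+N_{>K}$ with $N_{\le K}=\sum_{k\le K}kX_k\le\binom{K+1}{2}$, observe that $(S_{\le K},N_{\le K})$ is independent of $(S_{>K},N_{>K})$ under $\mathbb{P}_{q_n}$, and apply a local limit theorem for $N_{>K}$, namely $\mathbb{P}_{q_n}(N_{>K}=j)=\frac{1+o(1)}{\sigma_n\sqrt{2\pi}}$ uniformly for $|j-\mathbb{E}_{q_n}N_{>K}|=O(1)$, where $\sigma_n:=\mathrm{Var}_{q_n}(N_{>K})^{1/2}\asymp n^{3/4}$. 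Summing over the boundedly many values of $N_{\le K}$ then yields $P_n\bigl(2S_{\le K}-\sum_{k\le K}\frac1k\in\,\cdot\,\bigr)\to P\bigl(\sum_{k\le K}\varepsilon_k/k\in\,\cdot\,\bigr)$, and the three-$\varepsilon$ argument under $P_n$ would then conclude once the tail is controlled.

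The genuinely delicate point, and the step I expect to be the main obstacle, is controlling the tail $R_n^K$ under the \emph{conditioned} measure $P_n$: one must show that conditioning $(X_k)_{k>K}$ on $N_{>K}=j$ does not inflate $\mathrm{Var}(2S_{>K})$ beyond $O(1/K)$, i.e.\ a conditional second-moment estimate uniform in $j$ near $n$. I see two routes. One is to upgrade the local limit theorem above to a joint local limit theorem for $(2S_{>K},N_{>K})$ and read off the conditional variance bound. The other, perhaps cleanest, is to bypass conditioning entirely and argue with characteristic functions via the circle method: writing
$$
\mathbb{E}_{P_n}\!\bigl[e^{it(2S-\log\sqrt{3n})}\bigr]=e^{-it\log\sqrt{3n}}\;\frac{[q^n]\prod_{k\ge1}\bigl(1+e^{2it/k}q^k\bigr)}{[q^n]\prod_{k\ge1}(1+q^k)},
$$
one analyses the numerator by the saddle point at $q=q_n$: the extra factor $\prod_{k\ge1}\frac{1+e^{2it/k}q^k}{1+q^k}=\mathbb{E}_{q_n}[e^{2itS}]$ is bounded and slowly varying near the saddle, and the minor-arc estimates are the classical ones since $|1+e^{2it/k}q^k|\le 1+|q|^k$. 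This gives $\mathbb{E}_{P_n}[e^{it(2S-\log\sqrt{3n})}]=e^{it(2\mathbb{E}_{q_n}[S]-\log\sqrt{3n})}\,\mathbb{E}_{q_n}\bigl[e^{2it(S-\mathbb{E}_{q_n}S)}\bigr](1+o(1))$, and by part \textbf{(A)} the exponent tends to $0$ while $\mathbb{E}_{q_n}[e^{2it(S-\mathbb{E}_{q_n}S)}]\to\prod_{k\ge1}\cos(t/k)=\mathbb{E}[e^{itH}]$, so Lévy's continuity theorem finishes the proof.
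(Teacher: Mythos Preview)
Your route is genuinely different from the paper's, and Part~\textbf{(A)} is correct and clean: the $\eta$-asymptotic gives $2\mathbb{E}_{q_n}[S]=\log\sqrt{3n}+o(1)$, the variance bound $\mathrm{Var}_{q_n}(2S_{>K})\le 1/K$ is immediate, and the three-$\varepsilon$ argument yields the grand-canonical limit. The gap is exactly where you say it is, in Part~\textbf{(B)}: you have not established the tail bound $\lim_K\limsup_n P_n(|R_n^K|>\delta)=0$ under the \emph{conditioned} measure, and neither of your proposed routes is carried far enough to close it. The joint-LLT route would work but is not written; the circle-method sketch has two soft spots: the claim that $\prod_k\frac{1+e^{2it/k}q^k}{1+q^k}$ is ``slowly varying near the saddle'' needs a quantitative estimate (e.g.\ that its log-derivative in the angular variable is $O(\sqrt n)$, hence the variation over a major arc of width $n^{-3/4+\epsilon}$ is $o(1)$), and the minor-arc inequality $|1+e^{2it/k}q^k|\le 1+|q|^k$ only bounds the twisted integrand by the untwisted one, which by itself does not make the minor arcs negligible---you still need the standard decay of $\prod_k(1+q^k)$ away from the positive axis.

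The paper resolves precisely this tail-under-$P_n$ issue, but by a different mechanism. Rather than a fixed cutoff $K$, it takes two growing thresholds $k_n=\lfloor n^{1/5}\rfloor$ and $K_n=\lfloor n^{1/3}\rfloor$. For $k\le k_n$ it cites Fristedt's theorem that the joint density of $(X_1,\dots,X_{k_n})$ under $P_n$ is asymptotic to that of fair coins (your LLT argument, in effect). For $k_n<k\le K_n$ a total-variation bound of Fristedt transfers the $P_n$-probability to the Boltzmann one, and then Chebyshev applies exactly as in your Part~\textbf{(A)}. For $k>K_n$ the paper proves, by an elementary saddle-point large-deviation bound, a \emph{uniform} limit shape $\sup_{t\ge 0}\bigl|n^{-1/2}\sum_{k\le t\sqrt n}X_k-L(t)\bigr|\le n^{-1/4+\delta}$ with $P_n$-probability $\to 1$; summation by parts against $L$ then shows $\sum_{k>K_n}2X_k/k$ concentrates at $\log(\sqrt{3n}/K_n)-\gamma$ directly under $P_n$, with no further conditioning argument needed. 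So the paper trades your analytic transfer (circle method or joint LLT) for a probabilistic input (the strong limit shape); your approach, if completed, would give a single self-contained identity for the characteristic function, while the paper's stays elementary by leaning on Fristedt's machinery and the limit-shape appendix.
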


\begin{figure}[h] 
\centering
\includegraphics[scale=.4]{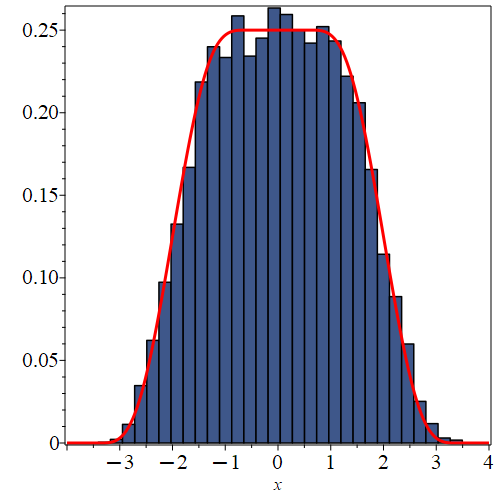}    
\caption{A histogram of 10\hspace{.7mm}000 values of $2S(\lambda)-\log(\sqrt{3|\lambda|})$, where partitions $\lambda$ have been generated in Maple by a Boltzmann sampler with parameter $q=e^{-\frac{\pi}{\sqrt{12n}}}$ with $n=2000$ (see \cite{DFLS,Fristedt}).  In red is an approximation to the density for $H$, as described in \cite[\S 5]{Schmuland}.}
\label{Histogram}
\end{figure}

Egyptian fractions - that is, sums of unit fractions with distinct denominators - have been studied since antiquity and remain a fascinating topic today with numerous open problems; see \cite[Ch. 3]{EG} and \cite[\S D11]{Guy}.  For instance, a well-known result of Graham \cite{Graham} states that every $n \geq 78$ has a distinct parts partition $\lambda$ with $S(\lambda)=1$. Much of the literature has focused on fixing the value of the sum $S$ or the number of the summands, rather than the size of the partition as we do here (for example, see \cite{Conlon,vD}).  Theorem \ref{T:Main} complements these results by addressing the distribution of egyptian fractions under natural order and size restrictions placed on denominators by \eqref{E:DistinctPartitionDef}.

The sum of reciprocal parts of distinct parts partitions was recently studied by B. Kim and E. Kim \cite{KK}, where the authors used the saddle-point method to prove asymptotic expansions for the first and second moments.  Bringmann, B. Kim and E. Kim then significantly improved these asymptotic expansions from exponential to square-root error utilizing modular transformations and the Hardy--Ramanujan Circle Method \cite{BKK}. 
 In the case that parts of a partition are allowed to repeat, the distribution of $S$ may be found in \cite{KK} and follows in a straightforward manner from known distributions of partition statistics.  On the other hand, when parts are distinct, the computation of the distribution is apparently more subtle.

Our approach to Theorem \ref{T:Main} is entirely elementary and combines the distribution of small part sizes due to Fristedt \cite{Fristedt} with a strong version of the limit shape for distinct parts partitions \cite{DVZ,Petrov,Yakubovich}.  The limit shape is re-derived in a form suitable for our purposes in the Appendix.

To describe our approach to Theorem \ref{T:Main}, let $X_k \in \{0,1\}$ be the multiplicity of the part size $k$.  Thus, we can study $S$ as the random variable,
\begin{equation}\label{E:Ssetup}
S=\sum_{k = 1}^n \frac{X_k}{k}, \qquad \text{subject to} \qquad \sum_{k = 1}^n kX_k = n.
\end{equation}
We break up the sum \eqref{E:Ssetup} into three ranges,
$$
[1,n]=[1,k_n] \cup (k_n,K_n] \cup (K_n,n],
$$ where\footnote{We have fixed these choices simply for convenience, and it is likely that the errors in Sections \ref{S:MediumParts} and \ref{S:LargeParts} could be improved.} $k_n:=\lfloor n^{1/5} \rfloor$ and $K_n:= \lfloor n^{1/3} \rfloor$.  By \cite[Theorem 9.2]{Fristedt}, the joint distribution of the $X_k$ in the range $k \leq k_n$ coincides in the limit with the joint distribution of $k_n$ independent Bernoulli random variables with parameter $\frac{1}{2}$, and, after renormalizing, this immediately leads to a random harmonic sum.  Next, we will show that the sums over the ranges $k_n<k \leq K_n$ and $K_n<k\leq n$ are almost surely $o(1)$ away from their mean.  The range $k_n<k \leq K_n$ will be dealt with using \cite[\S 10]{Fristedt} and Chebyshev's inequality; our overall argument is very similar to \cite[\S 8]{Fristedt}.  For the range $K_n < k \leq n$, we will employ the limit shape for distinct parts partitions in a suitable form \cite{DVZ,Petrov,Yakubovich}.

The remainder is organized as follows.  We study the contributions to the sum of reciprocal parts from small, intermediate and large part sizes in Sections \ref{S:SmallParts}, \ref{S:MediumParts} and \ref{S:LargeParts}, respectively.  In Section \ref{S:LargeParts}, we make use of suitable form of the limit shape of distinct parts partitions that we derive in the Appendix.  We complete the proof of Theorem \ref{T:Main} in Section \ref{S:Proof}.

\section{Contribution from small parts} \label{S:SmallParts}

The following proposition indicates that the sum of reciprocals of small parts is distributed, away from its mean, like the random harmonic sum.  Here and throughout, $\gamma=.577...$ is the Euler--Mascheroni constant.

\begin{proposition}\label{P:SmallParts}
For any $x \in \mathbb{R}$, we have
$$
\lim_{n \to \infty} P_n\left(\sum_{k \leq k_n} \frac{2X_k}{k}-\log\left(k_n\right) - \gamma\leq x \right) = P(H\leq x).
$$
\end{proposition}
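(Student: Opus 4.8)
The plan is to peel off the deterministic centering, reduce the claim to a statement about the signed sum $\sum_{k\le k_n}\frac{2X_k-1}{k}$, and then invoke Fristedt's Theorem~9.2 to pass to independent Bernoulli variables, handling the growing range $k\le k_n$ by truncation. Set $\eta_k:=2X_k-1\in\{-1,+1\}$. Since $\sum_{k\le k_n}\frac1k=\log k_n+\gamma+O(1/k_n)$ is purely deterministic,
$$
\sum_{k\le k_n}\frac{2X_k}{k}-\log k_n-\gamma=\sum_{k\le k_n}\frac{\eta_k}{k}+O\!\left(\tfrac{1}{k_n}\right),
$$
so it suffices to prove that $T_n:=\sum_{k\le k_n}\frac{\eta_k}{k}$ converges in distribution under $P_n$ to $H$.

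Fix $M\in\N$ and split $T_n=T_n^{(M)}+R_n^{(M)}$ with $T_n^{(M)}:=\sum_{k\le M}\frac{\eta_k}{k}$ and $R_n^{(M)}:=\sum_{M<k\le k_n}\frac{\eta_k}{k}$. By \cite[Theorem~9.2]{Fristedt}, the vector $(X_1,\dots,X_M)$ converges in distribution under $P_n$ to $M$ independent Bernoulli$(\tfrac12)$ variables, so $T_n^{(M)}\Rightarrow \sum_{k\le M}\frac{\varepsilon_k}{k}=:H_M$, and $H_M\to H$ almost surely as $M\to\infty$ by convergence of the random harmonic series (cf.\ \cite{Schmuland}). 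By the standard fact that convergence in distribution is preserved under perturbations that are uniformly small in probability, the proposition follows once we establish
$$
\lim_{M\to\infty}\ \limsup_{n\to\infty}\ P_n\!\left(\bigl|R_n^{(M)}\bigr|>\delta\right)=0\qquad\text{for every }\delta>0.
$$

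This tail estimate, the only real work, is handled by Chebyshev's inequality. Writing $\mathbb{E}_n$ for expectation under $P_n$ and using $\eta_k^2=1$,
$$
\mathbb{E}_n\!\left[\bigl(R_n^{(M)}\bigr)^2\right]=\sum_{M<k\le k_n}\frac{1}{k^2}+\sum_{\substack{M<j,k\le k_n\\ j\ne k}}\frac{\mathbb{E}_n[\eta_j\eta_k]}{jk}\le \frac1M+\sum_{\substack{M<j,k\le k_n\\ j\ne k}}\frac{\mathbb{E}_n[\eta_j\eta_k]}{jk}.
$$
For the off-diagonal sum we use Fristedt's conditioning device: under $P_n$ the $X_k$ are independent Bernoulli variables with $P(X_k=1)=\frac{q_n^k}{1+q_n^k}$, $q_n=e^{-c/\sqrt n}$, conditioned on $\sum_k kX_k=n$. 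Since $k\le k_n=\lfloor n^{1/5}\rfloor$ forces $1-q_n^k=O(k/\sqrt n)$, the estimates of \cite[\S 10]{Fristedt} (our situation paralleling \cite[\S 8]{Fristedt}) give, uniformly for $j,k\le k_n$, that $2\mathbb{E}_n[X_k]-1=O(k/\sqrt n)$ and $\mathrm{Cov}_n(X_j,X_k)=O(jk/n)$. Hence $|\mathbb{E}_n[\eta_j\eta_k]|=\bigl|4\,\mathrm{Cov}_n(X_j,X_k)+(2\mathbb{E}_n[X_j]-1)(2\mathbb{E}_n[X_k]-1)\bigr|=O(jk/n)$, and the off-diagonal sum is $O(k_n^2/n)=O(n^{-3/5})$. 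Therefore $\mathbb{E}_n[(R_n^{(M)})^2]\le 1/M+O(n^{-3/5})$, and Chebyshev gives $\limsup_{n\to\infty}P_n(|R_n^{(M)}|>\delta)\le 1/(\delta^2 M)\to 0$ as $M\to\infty$, which finishes the proof.

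The main obstacle is the third step, namely extracting the uniform covariance bound $\mathrm{Cov}_n(X_j,X_k)=O(jk/n)$ and the marginal estimate $2\mathbb{E}_n[X_k]-1=O(k/\sqrt n)$ for $j,k$ as large as $k_n$; this is a routine but slightly technical application of the conditioning device and local limit theorem exactly as in \cite[\S 8]{Fristedt}. I note that if a sufficiently strong form of \cite[Theorem~9.2]{Fristedt} — convergence in total variation for the whole block $(X_k)_{k\le k_n}$ at once — is available, then the truncation and $L^2$ estimate can be bypassed: one couples $(X_k)_{k\le k_n}$ directly to i.i.d.\ Bernoulli$(\tfrac12)$ variables $(B_k)$ so that they coincide with probability $1-o(1)$, and on that event $T_n=\sum_{k\le k_n}\frac{2B_k-1}{k}\to H$ almost surely, hence in distribution.
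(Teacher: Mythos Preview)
Your proposal is correct, but the paper takes precisely the shortcut you identify in your final paragraph. Since $k_n=\lfloor n^{1/5}\rfloor=o(n^{1/4})$, the discussion following \cite[Theorem~9.2]{Fristedt} applies to the \emph{entire} growing block $(X_1,\dots,X_{k_n})$ at once: the joint density of $(2X_1-1,\dots,2X_{k_n}-1)$ under $P_n$ is asymptotic to that of $(\varepsilon_1,\dots,\varepsilon_{k_n})$, which is exactly the total-variation statement you flag as an alternative. The paper then simply writes $\sum_{k\le k_n}\frac{2X_k}{k}-\log k_n-\gamma=\sum_{k\le k_n}\frac{2X_k-1}{k}+o(1)$, transfers to $\sum_{k\le k_n}\frac{\varepsilon_k}{k}$, and lets the partial sums converge to $H$; the whole proof is a few lines.

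Your main argument---truncate at a fixed $M$, then control the tail $R_n^{(M)}$ in $L^2$ via covariance bounds under $P_n$---is a genuinely different route that trades the strong block statement for only the fixed-$M$ version of Fristedt's theorem plus a second-moment estimate. The cost is that the bounds $2\mathbb{E}_n[X_k]-1=O(k/\sqrt n)$ and $\mathrm{Cov}_n(X_j,X_k)=O(jk/n)$, which you assert by reference to \cite[\S8,\S10]{Fristedt}, are not actually stated there in that form and would need to be extracted from the local limit theorem behind the conditioning device; doing so rigorously is roughly as much work as the strong form of Theorem~9.2 itself. So your approach is sound but strictly longer, and the paper opts for the direct route you already anticipated.
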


\begin{proof}
    By the discussion following \cite[Theorem 9.2]{Fristedt}, the density of the tuple $(2X_1-1,\dots, 2X_{k_n}-1)$ is asymptotic to the density of $(\varepsilon_1,\dots, \varepsilon_{k_n})$ as $n \to \infty$.  That is, for any sequence $\{x_k\}_{k \geq 1}$, where $x_k \in \{\pm 1\}$, we have
    $$
   P_n(X_k=x_k, \ k \leq k_n)\sim P(\varepsilon_k=x_k, \ k \leq k_n) = \frac{1}{2^{k_n}}.
    $$
The well-known expansion of the harmonic sum gives
$$
\sum_{k \leq k_n} \frac{2X_k}{k}-\log k_n-\gamma=\sum_{k \leq k_n} \frac{2X_k-1}{k} + o(1),
$$
so an immediate consequence is that
$$
\lim_{n \to \infty} \left(P_n\left(\sum_{k \leq k_n} \frac{2X_k}{k}-\log\left(k_n\right) - \gamma\leq x \right) - P\left( \sum_{k\leq k_n} \frac{\varepsilon_k}{k} \leq x\right)\right) = 0.
$$
Since the partial sums $\sum_{k \leq k_n}\frac{\varepsilon_k}{k}$ converge in distribution to $H$, this proves the proposition.
\end{proof}

\section{Contribution from intermediate parts} \label{S:MediumParts}

The following proposition shows that the contribution from intermediate parts is very small away from the mean.

\begin{proposition}\label{P:MediumParts}
We have
    $$
\lim_{n \to \infty} P_n\left(\left|\sum_{k_n<k\leq K_n} \frac{2X_k}{k}-\log\left(\frac{K_n}{k_n}\right)\right|\leq n^{-\frac{1}{11}} \right) = 1.
$$
\end{proposition}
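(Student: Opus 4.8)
The plan is to bound the centered sum $T_n := \sum_{k_n < k \le K_n} \tfrac{2X_k}{k} - \log(K_n/k_n)$ in probability via its first two moments and Chebyshev's inequality, following the template of \cite[\S 8]{Fristedt}. Since $\log(K_n/k_n) = \sum_{k_n < k \le K_n} \tfrac{1}{k} + o(1)$ by the harmonic expansion (and the error is $O(k_n^{-1}) = O(n^{-1/5})$, which is negligible next to the target $n^{-1/11}$), it suffices to control $U_n := \sum_{k_n < k \le K_n} \tfrac{2X_k - 1}{k}$.

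First I would invoke the asymptotics for the individual and joint distributions of the $X_k$ in the range $k \le K_n = \lfloor n^{1/3}\rfloor$ from \cite[\S 10]{Fristedt}. There Fristedt shows (for a parameter essentially $q = e^{-\pi/\sqrt{12n}}$) that $P_n(X_k = 1)$ is asymptotic to $\tfrac{1}{1+q^k} = \tfrac{1}{2} + O(k/\sqrt n)$ uniformly for $k$ in this range, and that pairs $X_j, X_k$ are asymptotically independent with an error controlled well enough to sum. Consequently $\mathbb{E}_n[U_n] = \sum_{k_n < k \le K_n}\tfrac{2P_n(X_k=1)-1}{k} = \sum_{k_n < k \le K_n} O(1/\sqrt n) = O(K_n/\sqrt n) = O(n^{-1/6})$, which is $o(n^{-1/11})$. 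For the variance, $\operatorname{Var}_n(U_n) = \sum_{k_n<k\le K_n}\tfrac{4\operatorname{Var}_n(X_k)}{k^2} + \sum_{j \ne k}\tfrac{4\operatorname{Cov}_n(X_j,X_k)}{jk}$; the diagonal term is $\le \sum_{k > k_n}\tfrac{1}{k^2} = O(1/k_n) = O(n^{-1/5})$, and the off-diagonal covariances, being $o(1)$ uniformly (indeed $O(jk/n)$-type by \cite{Fristedt}), contribute at most $O\!\big(\tfrac{1}{n}\big(\sum_{k\le K_n} 1\big)^2\big) = O(K_n^2/n) = O(n^{-1/3})$. So $\operatorname{Var}_n(U_n) = O(n^{-1/5})$.

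Then Chebyshev gives, for the chosen threshold,
$$
P_n\!\left(|U_n - \mathbb{E}_n[U_n]| > \tfrac{1}{2}n^{-1/11}\right) \le \frac{4\operatorname{Var}_n(U_n)}{n^{-2/11}} = O\!\left(n^{-1/5 + 2/11}\right) = O\!\left(n^{-1/55}\right) \to 0,
$$
and since $|\mathbb{E}_n[U_n]| = O(n^{-1/6}) < \tfrac{1}{2}n^{-1/11}$ for $n$ large, combined with the harmonic-expansion error this yields $P_n(|T_n| \le n^{-1/11}) \to 1$, which is the proposition.

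The main obstacle I anticipate is not the Chebyshev estimate itself but extracting the covariance and marginal bounds from \cite[\S 10]{Fristedt} in a form uniform over the whole range $k \le K_n$: one must check that Fristedt's error terms for $P_n(X_k = 1)$ and for the pairwise independence are genuinely uniform up to $k \asymp n^{1/3}$ (rather than only for fixed $k$ or $k = o(n^{1/6})$), and that the implied constants do not conspire to overwhelm the $1/(jk)$ weights when summed. If the off-diagonal bound in \cite{Fristedt} is only $o(1)$ without an explicit rate, one can instead pass to the grand-canonical (Boltzmann) ensemble — where the $X_k$ are genuinely independent Bernoulli$(1/(1+q^k))$ — compute everything exactly there, and transfer back to $P_n$ using the local limit theorem for the size $\sum k X_k$ with the standard $O(n^{-1/2})$-type cost, which is again absorbed by the slack in the exponents. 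I would present the argument through whichever of these two routes gives the cleanest uniform control.
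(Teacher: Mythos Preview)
Your proposal is correct, and in fact your backup route---pass to the grand-canonical Boltzmann ensemble where the $X_k$ become genuinely independent Bernoulli variables, compute moments there, and transfer back to $P_n$---is precisely the route the paper takes. The paper invokes \cite[Lemma~4.6]{Fristedt} (adapted to distinct parts as in \cite[\S 10]{Fristedt}) after verifying its hypothesis $\sum_{k_n<k\le K_n} k^2 q_n^k(1+q_n^k)^{-2} = o(n^{3/2})$; this reduces the event to the same event for independent Bernoulli $B_k$ with parameter $q_n^k/(1+q_n^k)$ (your $1/(1+q^k)$ is the complementary probability, a harmless slip for the centered sum). It then computes the mean $=\log(K_n/k_n)+O(n^{-1/6})$ and variance $=O(n^{-1/5})$ by integral comparison and finishes with Chebyshev, exactly matching your numerics. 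Your primary Route~1---extracting marginals and pairwise covariances directly under $P_n$---would also work in principle, but the obstacle you flag is real: Fristedt does not record uniform bounds of the shape $\mathrm{Cov}_{P_n}(X_j,X_k)=O(jk/n)$ up to $k\asymp n^{1/3}$, so establishing them would effectively route you back through the Boltzmann comparison anyway. The paper's choice buys exact independence (hence a purely diagonal variance) at the cost of one transfer lemma, which is cleaner than tracking off-diagonal terms.
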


\begin{proof}
  We apply \cite[Lemma 4.6]{Fristedt}, adjusted for distinct parts partitions as in the discussion in \cite[\S 10]{Fristedt}.  The condition we must verify is as follows: set $q_n=e^{-\frac{1}{A\sqrt{n}}}$ with $A=\frac{\sqrt{12}}{\pi}$.  By integral comparison, we have
  \begin{align*}
  \sum_{k_n<k\leq K_n} \frac{k^2q_n^k}{(1+q_n^k)^2} &= A^3n^{\frac{3}{2}}\sum_{k_n<k\leq K_n} \frac{q_n^k}{(1+q_n^k)^2}\left(\frac{k}{A\sqrt{n}}\right)^2\frac{1}{A\sqrt{n}} \\
  &\ll n\int_{\frac{k_n}{A\sqrt{n}}}^{\frac{K_n}{A\sqrt{n}}} \frac{ue^{-u}}{(1+e^{-u})^2}du \\
  &\ll nK_n \\
  &= o\left(n^{\frac{3}{2}}\right),
 \end{align*}
  so that \cite[Lemma 4.6]{Fristedt} applies.  It follows that
 \begin{multline}
  \lim_{n \to \infty} \left(P_n\left(\left|\sum_{k_n<k\leq K_n} \frac{2X_k}{k}-\log\left(\frac{K_n}{k_n}\right)\right|\leq n^{-\frac{1}{11}} \right) \right. \\ \left. - P\left(\left|\sum_{k_n<k\leq K_n} \frac{2B_k}{k}-\log\left(\frac{K_n}{k_n}\right)\right|\leq n^{-\frac{1}{11}} \right)\right) = 0, \label{E:TotalVariation}
 \end{multline}
  where the $B_k$ are independent Bernoulli random variables with parameter $1+q_n^k$.  We prove that the second term tends to 0 by Chebyshev's inequality.
  
  The expectation and variance of the sum of $\frac{2B_k}{k}$ can be approximated by integral comparison:
\begin{align*}
E\left(\sum_{k_n<k\leq K_n} \frac{2B_k}{k}\right)&= \sum_{k_n<k\leq K_n}\frac{2q_n^k}{k(1+q_n^k)} \\
&= \sum_{k_n<k\leq K_n}\frac{2q_n^k}{1+q_n^k} \left(\frac{k}{A\sqrt{n}}\right)^{-1}\frac{1}{A\sqrt{n}}\\
&=\int_{\frac{k_n}{A\sqrt{n}}}^{\frac{K_n}{A\sqrt{n}}} \frac{2e^{-u}}{u(1+e^{-u})}du + O\left( \left(\frac{k_n}{\sqrt{n}}\right)^{-1}\frac{e^{-\frac{k_n}{A\sqrt{n}}}}{1+e^{-\frac{k_n}{A\sqrt{n}}}}\frac{1}{\sqrt{n}} \right) \nonumber \\
&=\log\left(\frac{K_n}{k_n}\right)+\int_{\frac{k_n}{A\sqrt{n}}}^{\frac{K_n}{A\sqrt{n}}} \left(\frac{2e^{-u}}{u(1+e^{-u})}-\frac{1}{u}\right)du + O\left( \frac{1}{k_n} \right) \nonumber \\
    &=\log\left(\frac{K_n}{k_n}\right) +O\left(\frac{K_n}{\sqrt{n}}+\frac{1}{k_n}\right) \nonumber \\
    &=\log\left(\frac{K_n}{k_n}\right) +O\left(n^{-\frac{1}{6}}\right). \nonumber
\end{align*}
And, using independence, we have
\begin{align*}
\mathrm{Var}\left(\sum_{k_n<k\leq K_n} \frac{2B_k}{k}\right) &=\sum_{k_n<k\leq K_n}\frac{4}{k^2}\mathrm{Var}\left( B_k\right)\\
&= \sum_{k_n<k\leq K_n}\frac{4q_n^k}{k^2(1+q_n^k)^2}  \\
&= \frac{1}{A\sqrt{n}}\sum_{k_n<k\leq K_n}\frac{4q_n^k}{(1+q_n^k)^2} \left(\frac{k}{A\sqrt{n}}\right)^{-2}\frac{1}{A\sqrt{n}} \\
&=\frac{1}{A\sqrt{n}}\int_{\frac{k_n}{A\sqrt{n}}}^{\frac{K_n}{A\sqrt{n}}} \frac{4e^{-u}}{u^2(1+e^{-u})^2}du + O\left(\frac{1}{\sqrt{n}}\frac{e^{-\frac{k_n}{A\sqrt{n}}}}{1+e^{-\frac{k_n}{A\sqrt{n}}}} \left(\frac{k_n}{\sqrt{n}}\right)^{-2}\frac{1}{\sqrt{n}}\right) \nonumber \\
    &=O\left(\frac{1}{\sqrt{n}} \cdot \frac{\sqrt{n}}{k_n} +\frac{1}{k_n^2}\right) \\
    &=O\left(n^{-\frac{1}{5}}\right). \nonumber
\end{align*}
By Chebyshev's inequality, we conclude
$$
P\left(\left|\sum_{k_n<k\leq K_n} \frac{2B_k}{k} - \log\left(\frac{K_n}{k_n}\right)\right|>n^{-\frac{1}{11}} \right) \ll \frac{n^{\frac{2}{11}}}{ n^{\frac{1}{5}}}=o(1).
$$
Combining with \eqref{E:TotalVariation}, the Proposition follows.
\end{proof}

\begin{remark}
    The proof of Proposition \ref{P:MediumParts} is similar to the arguments in \cite[\S 8]{Fristedt}.  Namely, we would like to extend Proposition \ref{P:SmallParts} from the range $k \leq k_n$ to $k \leq K_n$, but it is not necessarily true that the joint distribution of the $(2X_k-1)$ coincides in the limit with the joint distribution of the $\varepsilon_k$.  Indeed, \cite[Theorem 9.2]{Fristedt} only applies in the range $k=o\left(n^{\frac{1}{4}}\right)$ and $K_n >n^{\frac{1}{4}}$.  However, the {\it sum} of $\frac{2X_k-1}{k}$ has small enough variance so that its distribution does in fact coincide with the sum of $\frac{\varepsilon_k}{k}$ for $k \leq K_n$.
\end{remark}

\section{Contribution from large parts} \label{S:LargeParts}

The following proposition shows that the contribution from the large parts is very small away from the mean.

\begin{proposition}\label{P:LargeParts}
We have
    $$
\lim_{n \to \infty} P_n\left(\left|\sum_{K_n<k\leq n} \frac{2X_k}{k}-\log\left(\frac{\sqrt{3n}}{K_n} \right) + \gamma\right|\leq n^{-\frac{1}{30}} \right) = 1.
$$
\end{proposition}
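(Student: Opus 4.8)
The plan is to rewrite the large-parts sum as an integral against the tail-counting function of $\lambda$, substitute the limit-shape profile for that function, and then split the argument into a deterministic integral evaluation and a probabilistic deviation estimate supplied by the strong limit shape of the Appendix. Set $R(s):=\#\{j:\lambda_j>s\}$ for $s\ge0$, a non-increasing step function decreasing by $1$ at each part. Since $\tfrac1\lambda=\int_\lambda^\infty s^{-2}\,ds$, summing over parts exceeding $K_n$ and interchanging the sum and integral gives the exact identity
$$
S_{\mathrm{large}}:=\sum_{K_n<k\le n}\frac{X_k}{k}=\int_{K_n}^{\infty}\frac{R(K_n)-R(s)}{s^2}\,ds=\frac{R(K_n)}{K_n}-\int_{K_n}^{\infty}\frac{R(s)}{s^2}\,ds.
$$
With $A=\tfrac{\sqrt{12}}{\pi}$ as in Section~\ref{S:MediumParts}, let $\bar R(s):=A\sqrt n\,\log\!\big(1+e^{-s/(A\sqrt n)}\big)$ be the limit-shape profile for the tail count and set $\bar S:=\frac{\bar R(K_n)}{K_n}-\int_{K_n}^{\infty}\bar R(s)s^{-2}\,ds$. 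The proposition follows from the deterministic claim $2\bar S=\log(\sqrt{3n}/K_n)-\gamma+O(n^{-1/6})$ together with the probabilistic claim that $P_n(|S_{\mathrm{large}}-\bar S|=o(n^{-1/30}))\to1$.

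For the deterministic claim I would substitute $u=s/(A\sqrt n)$, which turns $\int_{K_n}^{\infty}\bar R(s)s^{-2}\,ds$ into $\int_{\kappa_n}^{\infty}u^{-2}\log(1+e^{-u})\,du$ with $\kappa_n:=K_n/(A\sqrt n)\asymp n^{-1/6}$, and then integrate by parts once; the boundary term is exactly $\bar R(K_n)/K_n$, cancelling the leading term and leaving the clean formula $\bar S=\int_{\kappa_n}^{\infty}\frac{du}{u(e^u+1)}$. Writing $\frac{1}{e^u+1}=\frac{e^{-u}}{2}+\big(\frac{1}{e^u+1}-\frac{e^{-u}}{2}\big)$, the first piece contributes $\tfrac12\int_{\kappa_n}^{\infty}e^{-u}u^{-1}\,du=-\tfrac{\gamma}{2}-\tfrac12\log\kappa_n+O(\kappa_n)$, and the second integrand is bounded near $0$ and exponentially small at $\infty$, hence equals $C_\ast+O(\kappa_n)$ with $C_\ast:=\int_0^{\infty}\frac1u\big(\frac{1}{e^u+1}-\frac{e^{-u}}{2}\big)\,du=\tfrac12\int_0^{\infty}\frac{1-e^{-u}}{u(e^u+1)}\,du$. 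Expanding $(1-e^{-u})/(e^u+1)$ as an alternating series in $e^{-u}$ and integrating term by term by Frullani's formula identifies $2C_\ast=-2\sum_{k\ge2}(-1)^{k+1}\log k=\log(\pi/2)$, a regularized Wallis product. Since $K_n$ occurs identically in $\kappa_n$ and in $\log(\sqrt{3n}/K_n)$ and $A/\sqrt3=2/\pi$, one has $-\log\kappa_n-\log(\sqrt{3n}/K_n)=\log(2/\pi)$ exactly, so $2\bar S-\log(\sqrt{3n}/K_n)+\gamma=\log(2/\pi)+\log(\pi/2)+O(\kappa_n)=O(n^{-1/6})$.

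For the probabilistic claim I would invoke the strong limit shape derived in the Appendix: with $P_n$-probability tending to $1$ there is an event $\mathcal{E}_n$ on which $\sup_{K_n\le s\le n^{3/5}}|R(s)-\bar R(s)|\le\delta_n$ for some $\delta_n=o(n^{3/10})$. On $\mathcal{E}_n$,
$$
|S_{\mathrm{large}}-\bar S|\le\frac{|R(K_n)-\bar R(K_n)|}{K_n}+\int_{K_n}^{n^{3/5}}\frac{|R(s)-\bar R(s)|}{s^2}\,ds+\int_{n^{3/5}}^{\infty}\frac{R(s)+\bar R(s)}{s^2}\,ds,
$$
where the first two terms are $\ll\delta_n/K_n=o(n^{-1/30})$ since $K_n\asymp n^{1/3}$ and $\int_{K_n}^{\infty}s^{-2}\,ds\asymp1/K_n$, and the tail is $o(n^{-1/30})$ by the crude bounds $R(s)\le\ell(\lambda)\le\sqrt{2n}$ (distinct positive parts summing to $n$) and $\bar R(s)\le A\sqrt n\log2$. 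Combining with the deterministic claim, $|2S_{\mathrm{large}}-\log(\sqrt{3n}/K_n)+\gamma|=o(n^{-1/30})$ on $\mathcal{E}_n$, which is eventually $\le n^{-1/30}$, and $P_n(\mathcal{E}_n)\to1$ yields the proposition.

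The step I expect to be the main obstacle is the probabilistic input in exactly this form: a limit-shape estimate with a genuine \emph{polynomial} rate, $\sup|R-\bar R|=o(n^{3/10})$, valid uniformly all the way down to $s=K_n=\lfloor n^{1/3}\rfloor$, i.e.\ near where the limit shape meets the coordinate axis and where there is large cancellation between $R(K_n)/K_n$ and $\int_{K_n}^{\infty}R(s)s^{-2}\,ds$, each of size $\asymp n^{1/6}$. The classical limit shape ($o(\sqrt n)$ with no rate) is not strong enough here; the natural $n^{1/4}$-scale fluctuations, up to logarithms, are, which is precisely why the Appendix re-derives the shape quantitatively. The deterministic half is then routine once the evaluation $C_\ast=\tfrac12\log(\pi/2)$ is checked.
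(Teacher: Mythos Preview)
Your proof is correct and takes a genuinely different route from the paper's. Both arguments rest on the same key input---Proposition~\ref{P:LimitShape} of the Appendix, the uniform limit shape with error $n^{-1/4+\delta}$---which is exactly the polynomial-rate estimate you flag as the main obstacle. The difference lies in how the sum is reduced to the limit-shape integral and how the constant is evaluated.

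The paper discretizes: it partitions $[K_n/\sqrt n,\sqrt n]$ into steps of width $\Delta_n=n^{-1/5}$, sandwiches $\sum X_k/k$ between Riemann sums in $\phi_\lambda$, replaces each $\phi$-increment by an $L$-increment via Proposition~\ref{P:LimitShape}, Taylor-expands $L(t_{j,n})-L(t_{j-1,n})\approx\Delta_n L'(t_{j,n})$, and finally extracts the constant $\log(\pi/2)-\gamma$ by analytically continuing the Mellin transform $f(s)=-\tfrac1s+2\Gamma(s)\zeta(s)(1-2^{1-s})$ to $s=0$. Your approach replaces all of this with a single Abel-summation identity, yielding the closed form $\bar S=\int_{\kappa_n}^\infty\tfrac{du}{u(e^u+1)}$ and reading off the constant from the exponential-integral expansion plus the Wallis product. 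This is cleaner and sidesteps the $\Delta_n$-bookkeeping entirely; the paper's version, on the other hand, stays phrased in the forward count $\phi_\lambda$ exactly as it appears in the Appendix, whereas you must first translate Proposition~\ref{P:LimitShape} from $\phi$ to the tail count $R(s)=\ell(\lambda)-\phi_\lambda(s)$ via a triangle inequality and $|\ell-\sqrt n\,L(\infty)|\le n^{1/4+\delta}+o(1)$, a step you should make explicit. Two cosmetic remarks: the split at $n^{3/5}$ is unnecessary since Proposition~\ref{P:LimitShape} is uniform over all $t\ge0$, so $\int_{K_n}^\infty|R-\bar R|s^{-2}\,ds\le\delta_n/K_n$ directly; and the line $2C_*=-2\sum_{k\ge2}(-1)^{k+1}\log k$ is only formal---the rigorous route is to keep the convergent form $\sum_{k\ge1}(-1)^{k+1}\log\tfrac{k+1}{k}$ and recognize its partial products as the Wallis product.
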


\begin{proof}
      Set $\Delta_n:=n^{-\frac{1}{5}}$ and for $j =0,1,\dots, J$, let
    $$
    t_{j,n}:=\frac{K_n}{\sqrt{n}}+j\Delta_n,
    $$
    where $J=J_n$ is taken to satisfy
   $
   t_{J,n} \leq \sqrt{n} < t_{J+1,n}.
   $
    Observe that  
$$
\frac{1}{t_{j,n}\sqrt{n}} \sum_{t_{j-1,n}\sqrt{n}<k\leq t_{j,n}\sqrt{n}} X_k\leq \sum_{t_{j-1,n}\sqrt{n}<k\leq t_{j,n}\sqrt{n}}\frac{X_k}{k} \leq \frac{1}{t_{j-1,n}\sqrt{n}} \sum_{t_{j-1,n}\sqrt{n}<k\leq t_{j,n}\sqrt{n}} X_k.
$$
Thus, Proposition \ref{P:LimitShape} implies, for any $0<\delta<\frac{1}{4}$,
\begin{multline}
\sum_{j=1}^{J+1}\frac{2}{t_{j,n}} (L(t_{j,n})-L(t_{j-1,n})) + O\left(n^{-\frac{1}{4}+\delta}\sum_{j=1}^{J+1} \frac{1}{t_{j,n}}\right)\leq \sum_{K_n<k\leq n}\frac{2X_k}{k} \\ \leq \sum_{j=1}^{J+1}\frac{2}{t_{j-1,n}} (L(t_{j,n})-L(t_{j-1,n})) + O\left(n^{-\frac{1}{4}+\delta}\sum_{j=1}^{J+1} \frac{1}{t_{j-1,n}}\right), \label{E:FirstIneq}
\end{multline}
almost surely.  By integral comparison, we have 
   $$
   0 \leq \sum_{j=1}^{J+1} \frac{1}{t_{j-1,n}}\Delta_n - \int_{t_{0,n}}^{t_{J+1,n}} \frac{1}{t}dt  \leq \frac{\Delta_n}{t_{0,n}}\leq n^{-\frac{1}{30}},
   $$
   which implies
   $$
   n^{-\frac{1}{4}+\delta}\sum_{j=1}^{J+1} \frac{1}{t_{j-1,n}}\ll \frac{n^{-\frac{1}{4}+\delta}}{\Delta_n}\log n= n^{-\frac{1}{20}+\delta}\log n.
   $$
   Thus, \eqref{E:FirstIneq} becomes
   \begin{multline}
\sum_{j=1}^{J+1}\frac{2}{t_{j,n}} (L(t_{j,n})-L(t_{j-1,n})) + O\left(n^{-\frac{1}{20}+\delta}\log n\right)\leq \sum_{K_n<k\leq n}\frac{2X_k}{k} \\ \leq \sum_{j=1}^{J+1}\frac{2}{t_{j-1,n}} (L(t_{j,n})-L(t_{j-1,n})) + O\left(n^{-\frac{1}{20}+\delta}\log n\right), \label{E:SecondIneq}
\end{multline}
   almost surely.  For the sums on $L$, we first apply Taylor's Theorem; using the fact that $L''(t)$ is bounded, we have
$$
L(t_{j,n})-L(t_{j-1,n})=\Delta_nL'(t_{j,n}) + O(\Delta_n^2).
$$
Hence,
\begin{align}
\sum_{j=1}^{J+1} \frac{2}{t_{j,n}}(L(t_{j,n})-L(t_{j-1,n})) &= \sum_{j=1}^{J+1} \frac{2L'(t_{j,n})}{t_{j,n}}\Delta_n + O\left(\Delta_n\sum_{j=1}^{J+1} \frac{1}{t_{j,n}}\Delta_n \right) \nonumber \\ &= \sum_{j=1}^{J+1} \frac{2L'(t_{j,n})}{t_{j,n}}\Delta_n + O\left(n^{-\frac{1}{5}}\log n \right).\label{E:Ldifferencesum}
\end{align}
For the main term, we use integral comparison once again to write
   $$
   0\leq \sum_{j=1}^{J+1} \frac{2L'(t_{j,n})}{t_{j,n}}\Delta_n-\int_{t_{1,n}}^{t_{J+1,n}} \frac{2L'(t)}{t}dt \leq \Delta_n\frac{2L'(t_1)}{t_1}.
   $$
   Since $L'(t)= \frac{e^{-\frac{t}{A}}}{1+e^{-\frac{t}{A}}}$ is bounded and exponentially decaying at infinity, we have
   \begin{align*}
\sum_{j=1}^{J+1} \frac{2L'(t_{j,n})}{t_{j,n}}\Delta_n &=\int_{t_{1,n}}^{t_{J+1,n}} \frac{2L'(t)}{t}dt + O\left(\frac{\Delta_n}{t_1}\right) \\
&=\int_{t_{0,n}}^{\infty} \frac{2L'(t)}{t}dt + O\left(\frac{\Delta_n}{t_0}\right) \\
&=\int_{\frac{t_{0,n}}{A}}^{\infty} \frac{2e^{-t}}{t(1+e^{-t})}dt + O\left(n^{-\frac{1}{30}}\right) \\
&=\log\left(\frac{A}{t_0}\right)+\int_{0}^{1} \frac{1}{t}\left(\frac{2e^{-t}}{1+e^{-t}}-1\right)dt + \int_{1}^{\infty} \frac{2e^{-t}}{t(1+e^{-t})}dt + O\left(n^{-\frac{1}{30}}\right).
   \end{align*}
We evaluate the constant above using Mellin transforms as follows.  Set
$$
f(s):=\int_{0}^{1} \left(\frac{2e^{-t}}{1+e^{-t}}-1\right)t^{s-1}dt + \int_{1}^{\infty} \frac{2e^{-t}}{1+e^{-t}} t^{s-1}dt \qquad \mathrm{Re}(s)>-1,
$$
so we are interested in the value $f(0)$.  For $s>0$, a short computation reveals
$$
f(s)=-\frac{1}{s}+\int_0^{\infty} \frac{2e^{-t}}{1+e^{-t}}t^{s-1}dt= -\frac{1}{s}+2\Gamma(s)\zeta(s)\left(1-2^{1-s}\right).
$$
After computing the Taylor expansion on the right-hand side at $s=0$, we conclude $f(0)=\log\left(\frac{\pi}{2}\right)-\gamma$.  Thus,
$$
\sum_{j=1}^{J+1} \frac{2L'(t_{j,n})}{t_{j,n}}\Delta_n =\log\left(\frac{A\pi}{2t_0}\right)-\gamma+O\left(n^{-\frac{1}{30}}\right) = \log\left(\frac{\sqrt{3n}}{K_n}\right)-\gamma+O\left(n^{-\frac{1}{30}}\right).
$$
Combining with \eqref{E:Ldifferencesum} and taking $\delta<\frac{1}{20}-\frac{1}{30}$, we have
$$
\sum_{j=1}^{J+1} \frac{2}{t_{j,n}}(L(t_{j,n})-L(t_{j-1,n}))=\log\left(\frac{\sqrt{3n}}{K_n}\right)-\gamma+O\left(n^{-\frac{1}{30}}\right).
$$
Similarly, we can prove the same estimate for the right-hand side of \eqref{E:SecondIneq}.  The proposition follows.
\end{proof}

\section{Proof of Theorem \ref{T:Main}}\label{S:Proof}

We first write
\begin{multline*}
2S-\log(\sqrt{3n})=\left(\sum_{k \leq k_n} \frac{2X_k}{k}-\log\left(k_n\right) - \gamma\right) + \left(\sum_{k_n<k \leq K_n} \frac{2X_k}{k} - \log\left(\frac{K_n}{k_n}\right) \right) \\ + \left(\sum_{K_n<k \leq n} \frac{2X_k}{k} - \log\left(\frac{\sqrt{3n}}{K_n}  \right)+\gamma \right).
\end{multline*}
By Propositions \ref{P:MediumParts} and \ref{P:LargeParts}, the two right-most terms above lie in $[-n^{-1/30},n^{-1/30}]$ almost surely as $n \to \infty$.  Thus, for any fixed $x \in \mathbb{R}$, 
$$
\lim_{n \to \infty} \left(P_n\left(2S-\log(\sqrt{3n})\leq x \right) - P_n\left(\sum_{k \leq k_n} \frac{2X_k}{k}-\log\left(k_n\right) - \gamma\leq x \right)\right)=0.
$$
Theorem \ref{T:Main} now follows from Proposition \ref{P:SmallParts}.

\appendix 

\section{Strong limit shape}

Given a partition $\lambda$, the function,
$$
\phi_{\lambda}(t):= \sum_{k \leq t} X_k(\lambda),
$$
describes the shape of the Young diagram of $\lambda$ when parts are left-justified along the $y$-axis and arranged in increasing order.  If both axes are rescaled by $\frac{1}{\sqrt{n}}$ for partitions of $n$, then these random step functions converge in probability to the limit shape,
$$
L(t):=A\log\left(\frac{2}{1+e^{-\frac{t}{A}}}\right)=\int_0^{t} \frac{e^{-\frac{u}{A}}}{1+e^{-\frac{u}{A}}}du.
$$
This phenomenon is well-studied for integer partitions (for example, see \cite{DVZ,DeSalvoPak,Petrov,VershikYakubovich,Yakubovich}).  In the case of distinct parts partitions, Yakubovich \cite{Yakubovich} showed that for any fixed $0<t_1 < \dots < t_r$, the vector
$$
\frac{1}{\sqrt{n}}\left(\phi(t_1\sqrt{n}), \dots, \phi(t_r\sqrt{n}) \right) 
$$
varies from $(L(t_1),\dots,L(t_r))$ like a $r$-dimensional Gaussian at the scaling $n^{-\frac{1}{4}}$.  We prove a limit shape at a weaker scaling that covers all $t \geq 0$ simultaneously.  We closely follow the elementary proof of the limit shape given by Petrov \cite{Petrov}.

\begin{figure}[h] 
\centering
\includegraphics[scale=.4]{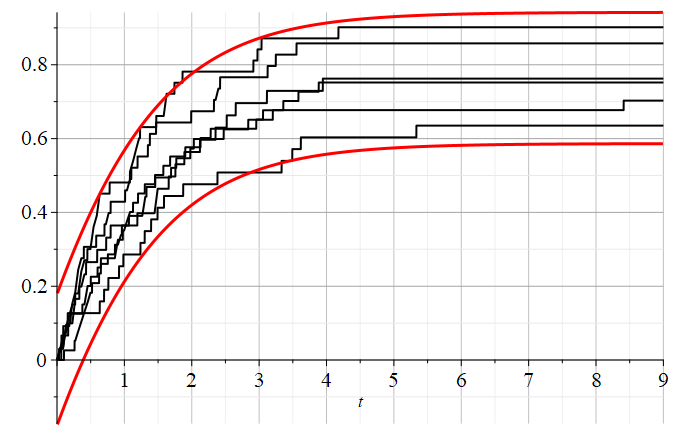}   
\caption{The black step functions are the renormalized shapes $\frac{1}{\sqrt{|\lambda|}}\phi_{\lambda}(\sqrt{|\lambda|}t)$ for six random distinct parts partitions $\lambda$ of sizes 992, 1592, 1065, 1475, 910, and 1107, generated using a Boltzmann sampler with parameter $q=e^{-\frac{\pi}{\sqrt{12n}}}$ with $n=1000$ (see \cite{DFLS,Fristedt}).  In red are the curves $L(t) \pm n^{-\frac{1}{4}}$.}
\label{HookZeros5mod7}
\end{figure}

\begin{proposition}\label{P:LimitShape}
    For fixed $0<\delta<\frac{1}{4}$, we have
    $$
    \limsup_{n \to \infty} n^{-\delta}\log P_n\left(\inf_{t \geq 0}\left|\frac{1}{\sqrt{n}}\sum_{k \leq t\sqrt{n}}X_k- L(t)\right|> n^{-\frac{1}{4}+\delta} \right) <0.
    $$
\end{proposition}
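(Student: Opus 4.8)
The plan is to pass to the grand canonical (Boltzmann) ensemble, exactly as in Petrov's proof~\cite{Petrov}. Fix $q = q_n := e^{-1/(A\sqrt{n})}$ with $A = \sqrt{12}/\pi$ (the same $A$ as in Section~\ref{S:MediumParts}), and let $P_q$ be the measure on distinct parts partitions of arbitrary size that weights $\lambda$ proportionally to $q^{|\lambda|}$. Under $P_q$ the multiplicities $X_k$ are independent with $P_q(X_k = 1) = p_k := q^k/(1+q^k) = L'(k/\sqrt n)$, where $L'(u) = e^{-u/A}/(1+e^{-u/A})$, and $P_n$ is precisely $P_q$ conditioned on $\{|\lambda| = n\}$. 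Hence for any event $E$,
$$
P_n(E) = \frac{P_q\big(E \cap \{|\lambda| = n\}\big)}{P_q(|\lambda| = n)} \le \frac{P_q(E)}{P_q(|\lambda| = n)}.
$$
The choice of $A$ makes $E_q|\lambda| = \sum_k k\,p_k = n + O(\sqrt n)$, and $\mathrm{Var}_q|\lambda| \asymp n^{3/2}$, so a local limit theorem for $|\lambda|$ under $P_q$ (as in~\cite{Petrov}; see also~\cite{Fristedt}) gives $P_q(|\lambda| = n) \gg n^{-3/4}$ -- for our purposes any fixed negative power of $n$ would do. Writing $E_n := \big\{\sup_{t \ge 0}\big|\tfrac1{\sqrt n}\sum_{k \le t\sqrt n} X_k - L(t)\big| > n^{-1/4+\delta}\big\}$ (which contains the event in the statement), it therefore suffices to prove $P_q(E_n) \le \exp(-c\,n^{\delta})$ for some $c > 0$, since the polynomial factor $n^{3/4}$ is swallowed by the $\limsup n^{-\delta}\log(\cdot)$.

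The second step is to replace the supremum over all $t \ge 0$ by a supremum over polynomially many values of $t$, using monotonicity. Both $t \mapsto \tfrac1{\sqrt n}\phi_\lambda(t\sqrt n)$ and $t \mapsto L(t)$ are nondecreasing, and $L$ is Lipschitz with $\|L'\|_\infty = \tfrac12$; hence on any subinterval of length $\le n^{-1/4+\delta}$ the deviation $\big|\tfrac1{\sqrt n}\phi_\lambda(t\sqrt n) - L(t)\big|$ exceeds the larger of its two endpoint values by at most $n^{-1/4+\delta}/2$. On the range $0 \le t \le T_n := \log n$ this reduces $E_n$ to the event that $\big|\tfrac1{\sqrt n}\phi_\lambda(t\sqrt n) - L(t)\big| > \tfrac12 n^{-1/4+\delta}$ holds at one of $N \ll n^{1/4-\delta}\log n$ grid points. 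For $t \ge T_n$, note $0 \le L(\infty) - L(t) = A\log(1+e^{-t/A}) \le A e^{-t/A}$ is $o(n^{-1/4+\delta})$, while the number of parts of $\lambda$ exceeding $T_n\sqrt n$ is, under $P_q$, a sum of independent Bernoullis of mean $\ll \sqrt n\, e^{-T_n/A} = o(n^{1/4+\delta})$, so a Chernoff bound shows it exceeds $n^{1/4+\delta}$ with probability $\le \exp(-c\,n^{\delta})$. By monotonicity again, controlling the deviation at $t = T_n$ and controlling these two tail quantities controls $\sup_{t \ge T_n}\big|\tfrac1{\sqrt n}\phi_\lambda(t\sqrt n) - L(t)\big|$.

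The remaining, pointwise, step is to bound, for a fixed $t \in [0, T_n]$, the $P_q$-probability that $\phi_\lambda(t\sqrt n) = \sum_{k \le t\sqrt n} X_k$ deviates from $\sqrt n\, L(t)$ by more than $\tfrac12 n^{1/4+\delta}$. Under $P_q$ this is a sum of at most $T_n\sqrt n$ independent Bernoulli variables with
$$
E_q\,\phi_\lambda(t\sqrt n) = \sum_{k \le t\sqrt n} L'(k/\sqrt n) = \sqrt n\, L(t) + O(1)
$$
(the error being $O(1)$ by comparison with $\sqrt n\int_0^t L'$, since $L'$ is monotone and bounded) and with variance at most $\sum_{k \le t\sqrt n} p_k = O(\sqrt n)$. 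Since $O(1) = o(n^{1/4+\delta})$, the claimed event forces the centred sum to exceed $\tfrac14 n^{1/4+\delta}$ in absolute value, and Bernstein's inequality gives a bound of order
$$
\exp\!\left(-\,\frac{c\, n^{1/2+2\delta}}{\sqrt n + n^{1/4+\delta}}\right) = \exp\big(-\,c'\,n^{2\delta}\big),
$$
where we used $\delta < \tfrac14$ so that $n^{1/4+\delta} = o(\sqrt n)$. A union bound over the $N \ll n^{1/4-\delta}\log n$ grid points, together with the two tail estimates, yields $P_q(E_n) \le \exp(-c\, n^{\delta})$ for some $c > 0$ and all large $n$, which is what was needed.

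The step I expect to require the most care is the passage from the pointwise large-deviation estimate to the uniform-in-$t$ statement, combined with the local-limit lower bound for $P_q(|\lambda| = n)$: the Bernstein estimate itself is routine, but the truncation level $T_n$, the mesh size, and the tail corrections must be arranged so that the number of grid points, the Chernoff losses, and the polynomial cost of conditioning are all dominated by $\exp(-c\,n^{\delta})$, and one must invoke (or, following~\cite{Petrov}, reprove by an elementary saddle-point argument) the local limit theorem for $|\lambda|$ under the grand canonical measure.
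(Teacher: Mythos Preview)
Your argument is correct, but the paper takes a different route. Instead of passing to the grand canonical ensemble and paying the conditioning cost $P_q(|\lambda|=n)^{-1}$ via a local limit theorem, the paper works directly under $P_n$: it writes
\[
P_n\!\left(\sum_{k\le a_n}X_k=b_n\right)=\frac{1}{d(n)}\,[q^n][\zeta^{b_n}]\prod_{k\le a_n}(1+\zeta q^k)\prod_{k>a_n}(1+q^k),
\]
drops both coefficient extractions via the saddle-point bound at $q=q_n$ and $\zeta=e^{x_n}$ with $x_n=\pm n^{-1/4}$, and uses only the crude asymptotic $\log d(n)=\tfrac{2\sqrt n}{A}+O(\log n)$. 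A Taylor expansion in $x_n$ then gives $P_n(\cdot)\le\exp\bigl(\pm n^{1/4}(L(\alpha_n)-\beta_n)+O(\log n)\bigr)$, and choosing the sign of $x_n$ yields $e^{-n^\delta+O(\log n)}$ whenever $|\beta_n-L(\alpha_n)|>n^{-1/4+\delta}$; a union bound over the at most $n^2$ integer pairs $(a_n,b_n)$ finishes. Your approach is more modular (Bernstein and Chernoff off the shelf, and in fact your pointwise bound $e^{-cn^{2\delta}}$ is sharper), but it imports the local limit theorem as a black box; the paper's tilt argument is slightly more hands-on but is entirely self-contained, needing no LLT and no truncation at $T_n$. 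Both are in the spirit of \cite{Petrov}, which presents two elementary approaches.
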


\begin{remark}
In particular, we have
    $$
    \lim_{n \to \infty} P_n\left(\sup_{t \geq 0}\left|\frac{1}{\sqrt{n}}\sum_{k \leq t\sqrt{n}}X_k- L(t)\right|\leq n^{-\frac{1}{4}+\delta} \right)=1.
    $$
\end{remark}
\begin{proof}
Let $d(n)$ be the number of distinct parts partitions of $n$.  We require only a weak form of the well-known asymptotic expansion of $d(n)$ \cite{Hagis},
$$
d(n)=e^{\frac{2\sqrt{n}}{A}+O(\log n)}.
$$

    Let $a_n,b_n \in \mathbb{N}_0$ and define $\alpha_n,\beta_n$ by
    $$
    a_n=\alpha_n\sqrt{n}, \qquad b_n=\beta_n\sqrt{n}.
    $$
    Assume that $\alpha_n,\beta_n \geq n^{-\frac{1}{4}+\delta}$.  We use the saddle point bound to write, for any $x_n \in \mathbb{R}$,
    \begin{align*}
        &P_n\left(\frac{1}{\sqrt{n}}\sum_{k \leq a_n}X_k=b_n\right) \\
        &=\frac{1}{d(n)} [q^n][\zeta^{b_n}] \prod_{k \leq a_n}(1+\zeta q^k)\prod_{k > a_n}(1+q^k) \\
        &\leq \frac{1}{d(n)}q_n^{-n}e^{-b_nx_n}\prod_{k \leq a_n}(1+e^{x_n} q_n^k)\prod_{k > a_n}(1+q_n^k) \\
        &=\exp\left(\frac{\sqrt{n}}{A} - \log d(n) - \beta_nx_n\sqrt{n}+\sum_{k \leq a_n} \log\left(1+e^{x_n-\frac{k}{A\sqrt{n}}}\right)+\sum_{k > a_n} \log\left(1+e^{-\frac{k}{A\sqrt{n}}}\right) \right).
    \end{align*}
    We will take $x_n \in \{\pm n^{-\frac{1}{4}}\}$, where the sign will depend on $b_n$.  By Taylor's Theorem,
   \begin{align*}
    \left|\log\left(1+e^{x_n-\frac{k}{A\sqrt{n}}}\right) - \log\left(1+e^{-\frac{k}{A\sqrt{n}}}\right) - x_n\frac{e^{-\frac{k}{A\sqrt{n}}}}{1+e^{-\frac{k}{A\sqrt{n}}}} \right| &\leq \frac{x_n^2}{2}\sup_{|x|\leq n^{\frac{1}{4}}} \frac{e^{x-\frac{k}{A\sqrt{n}}}}{\left(1+e^{x-\frac{k}{A\sqrt{n}}}\right)^2} \\ &\leq x_n^2 \frac{e^{-\frac{k}{A\sqrt{n}}}}{\left(1+\frac{1}{2}e^{-\frac{k}{A\sqrt{n}}}\right)^2}.
  \end{align*}
  Thus,
  \begin{multline*}
      P_n\left(\frac{1}{\sqrt{n}}\sum_{k \leq a_n}X_k=b_n\right) \leq \exp\left(\frac{\sqrt{n}}{A} - \log d(n) + \sum_{k \geq 1} \log\left(1+e^{-\frac{k}{A\sqrt{n}}}\right)- \beta_nx_n\sqrt{n} \right.  \\ \left. +x_n\sum_{k \leq a_n}\frac{e^{-\frac{k}{A\sqrt{n}}}}{1+e^{-\frac{k}{A\sqrt{n}}}} + O\left( x_n^2 \sum_{k \leq a_n}\frac{e^{-\frac{k}{A\sqrt{n}}}}{\left(1+\frac{1}{2}e^{-\frac{k}{A\sqrt{n}}}\right)^2}\right) \right). 
  \end{multline*}
  Now, note that the functions,
$$  
      u\mapsto \begin{cases} \log(1+e^{-u}) \\ \frac{e^{-u}}{1+e^{-u}} \\ \frac{e^{-u}}{(1+e^{-u})^2}, \end{cases}
$$
are decreasing, continuous and integrable on $[0,\infty)$.  Hence, by integral comparison,
\begin{align}
    \sum_{k \geq 1} \log\left(1+e^{-\frac{k}{A\sqrt{n}}}\right)&=A\sqrt{n}\sum_{k \geq 1} \log\left(1+e^{-\frac{k}{A\sqrt{n}}}\right)\frac{1}{A\sqrt{n}} \nonumber  \\
    &=A\sqrt{n}\int_0^{\infty} \log(1+e^{-u})du + O(1) \nonumber  \\
    &=\frac{\sqrt{n}}{A} + O(1), \label{E:x0term}
\end{align}
and
\begin{align}
    \sum_{k \leq a_n}\frac{e^{-\frac{k}{A\sqrt{n}}}}{1+e^{-\frac{k}{A\sqrt{n}}}}&=\sqrt{n}\sum_{k \leq a_n}\frac{e^{-\frac{k}{A\sqrt{n}}}}{1+e^{-\frac{k}{A\sqrt{n}}}}\frac{1}{\sqrt{n}} \nonumber  \\
    &=\sqrt{n}\int_0^{\frac{a_n}{\sqrt{n}}} \frac{e^{-\frac{u}{A}}}{1+e^{-\frac{u}{A}}}du + O(1) \nonumber  \\
    &=\sqrt{n}L\left(\frac{a_n}{\sqrt{n}}\right) + O(1) \nonumber  \\
    &=\sqrt{n}L(\alpha_n)+O(1), \label{E:x1term}
\end{align}
and 
\begin{align}
    \sum_{k \leq a_n}\frac{e^{-\frac{k}{A\sqrt{n}}}}{\left(1+e^{-\frac{k}{A\sqrt{n}}}\right)^2}&=A\sqrt{n}\sum_{k \leq a_n}\frac{e^{-\frac{k}{A\sqrt{n}}}}{\left(1+e^{-\frac{k}{A\sqrt{n}}}\right)}\frac{1}{A\sqrt{n}} \nonumber  \\
    &=O(\sqrt{n}), \label{E:x2term}
\end{align}
where the error terms in \eqref{E:x0term}, \eqref{E:x1term} and \eqref{E:x2term} are independent of $a_n$ and $b_n$.  Thus, using $\log d(n) = \frac{2\sqrt{n}}{A} + O(\log n)$, we have
\begin{align*}
      P_n\left(\frac{1}{\sqrt{n}}\sum_{k \leq a_n}X_k=b_n\right) &\leq \exp\left( \sqrt{n}x_n\left(-\beta_n+L(\alpha_n) + O(x_n)\right) + O(\log n) \right) \\
      &\leq \exp\left( \pm n^{\frac{1}{4}}\left(-\beta_n+L(\alpha_n)\right) + O(\log n) \right),
\end{align*}
where the sign above will depend on $\alpha_n$ and $\beta_n$ and the error is independent of $\alpha_n$ and $\beta_n$.  It follows that if
\begin{equation}\label{E:LimitShapeCondition}
\left|-\beta_n+L(\alpha_n)\right|> n^{-\frac{1}{4}+\delta},
\end{equation}
then the sign of $x_n$ may be chosen so that, uniformly for $\alpha_n$ and $\beta_n$ satisfying \eqref{E:LimitShapeCondition}, we have
$$
P_n\left(\frac{1}{\sqrt{n}}\sum_{k \leq a_n}X_k=b_n\right) \leq e^{-n^{\delta}+O(\log n)}.
$$
Since the condition \eqref{E:LimitShapeCondition} is equivalent to
$$
\left|\frac{1}{\sqrt{n}}\sum_{k \leq a_n}X_k - L(\alpha_n)\right|>n^{-\frac{1}{4}+\delta},
$$
and since there are at most $n^2$ possible pairs $(a_n,b_n)$, we conclude
\begin{equation}\label{E:LimitShapealpha}
    \limsup_{n \to \infty} n^{-\delta}\log P_n\left(\inf_{ \alpha_n \in n^{-\frac{1}{2}} \mathbb{N}_0, \alpha_n \leq \sqrt{n}}\left|\frac{1}{\sqrt{n}}\sum_{k \leq \alpha_n\sqrt{n}}X_k- L(\alpha_n)\right|> n^{-\frac{1}{4}+\delta} \right) <0.
 \end{equation}
To extend the infimum over all real $t \geq 0$, note that for $a_n\leq t\sqrt{n} <a_n+1$ with $a_n \leq n-1$, we have
$$
\frac{1}{\sqrt{n}}\sum_{k \leq t\sqrt{n} }X_k= \frac{1}{\sqrt{n}}\sum_{k \leq a_n}X_k,
$$
and $L(t)-L(\alpha_n) \ll \frac{1}{\sqrt{n}}.$  For $t$ satisfying $t\sqrt{n}\geq n$ (that is, $t\geq \sqrt{n}$), we have
$$
\frac{1}{\sqrt{n}}\sum_{k \leq t\sqrt{n} }X_k= \frac{1}{\sqrt{n}}\sum_{k \leq n}X_k,
$$
and $L(t)-L(\sqrt{n}) \ll e^{-\frac{\sqrt{n}}{A}}$.  Thus, the infimum in \eqref{E:LimitShapealpha} may be extended over all real $t \geq 0$, and the proposition follows.
\end{proof}

\end{document}